\newcommand{\R}{\mathbb{R}}
\newcommand{\C}{\mathbb{C}}
\newcommand{\re}{\operatorname{Re}}
\numberwithin{equation}{section}
\numberwithin{figure}{section}
\theoremstyle{plain} %text of this environment is typesetted in italics
\newtheorem{theorem}{Theorem}[section]
\theoremstyle{definition} %text of this environment is typesetted in roman letters
\title{A vase of catenoids}
\author{Peter Connor}
\address{Peter Connor\\Department of Mathematical Sciences\\Indiana University South Bend\\1700 Mishawaka Ave\\South
Bend\\IN 46634\\USA\\pconnor@iusb.edu}
\begin{document}
\maketitle

\noindent {\sc Abstract. } {\footnotesize}
In this note we construct a vase of catenoids - a symmetric immersed minimal surface with planar and catenoid ends.

\noindent
{\footnotesize %2000 MSC numbers
2000 \textit{Mathematics Subject Classification}.
Primary 53A10; Secondary 49Q05, 53C42.
}

\noindent
{\footnotesize %key words and phrases
\textit{Key words and phrases}. 
Minimal surface, catenoid.
}

\section{Introduction}
The building blocks for minimal surfaces with finite total curvature and embedded ends are planes and catenoids.  One can consider the possible arrangements of planar and catenoid ends that yields a minimal surface.  This note proves the existence of two beautiful families of minimal surfaces on punctured spheres.  The first, which we call a vase of catenoids, has a horizontal planar end, a downward pointing catenoid end with vertical normal, and symmetrically placed upward pointing catenoid ends with non-vertical normals.  See figure \ref{figure:kvase}.  
\begin{figure}[h]
	\centerline{ 
		\includegraphics[height=3.45in]{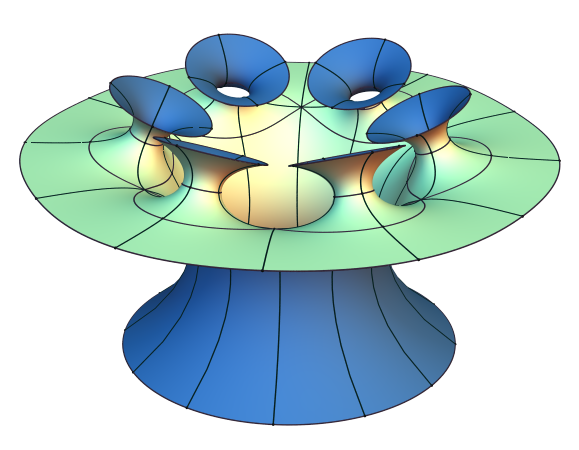}
			}
	\caption{Vase of catenoids}
	\label{figure:kvase}
\end{figure}

The second is a variation of the vase of catenoids.  Imagine taking two copies of a vase of catenoids.  Cut off the bottom catenoid end on each copy, and glue the copies together along the resulting closed curves.  See figure \ref{figure:doublekvase} for the resulting surface.  It has two horizontal planar ends, symmetrically placed downward pointing catenoid ends with non-vertical normals, and symmetrically placed upward pointing catenoid ends with non-vertical normals.
\begin{figure}[t]
	\centerline{ 
		\includegraphics[height=3.45in]{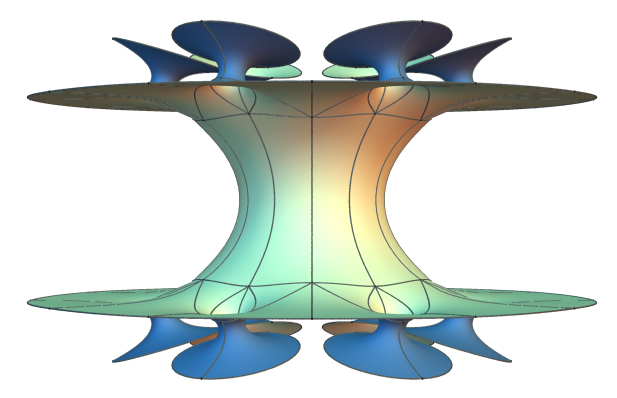}
			}
	\caption{Two vases glued together}
	\label{figure:doublekvase}
\end{figure}

The construction of these surfaces was inspired by the Finite Riemann minimal surface constructed in \cite{we5} with a horizontal planar end together with two catenoid ends with non-vertical normals, and also by the $k$-noid surface constructed by Jorge and Meeks \cite{jm1} with catenoid ends at each root of unity with horizontal normals in the direction of that root of unity.  None of these surfaces are embedded.  By the Lopez-Ros theorem \cite{lor1}, the plane and catenoid are the only complete embedded minimal surfaces on punctured spheres.  
\section{Weierstrass Representation}
We use the Weierstrass Representation of minimal surfaces on a punctured sphere, which may be written as
\[
X(z)=\re\int_{z_0}^z\left(\frac{1}{2}\left(\frac{1}{G}-G\right)dh,\frac{i}{2}\left(\frac{1}{G}+G\right)dh,dh\right)
\]
where $z,z_0\in\Sigma=\overline{\C}-\{p_1,p_2,\ldots,p_n\}$.  The points $p_1,p_2,\ldots,p_n$ are the ends of the surface, $G$ is the composition of stereographic projection with the Gauss map, and $dh$ is a meromorphic one-form called the height differential.  A good reference for the Weierstrass representation is \cite{os1}.  One issue is that $X$ depends on the path of integration.  The map $X:\Sigma\rightarrow\R^3$ is well defined provided that 
\[
\re\int_\gamma\left(\frac{1}{2}\left(\frac{1}{G}-G\right)dh,\frac{i}{2}\left(\frac{1}{G}+G\right)dh,dh\right)=(0,0,0)
\]
for all closed curves $\gamma$ in $\Sigma$.  This is called the period problem, and it can be expressed as
\[
\text{Res}_{p_j}\left(\left(\frac{1}{G}-G\right)dh\right)\in\R, \text{Res}_{p_j}\left(i\left(\frac{1}{G}+G\right)dh\right)\in\R, \text{Res}_{p_j}\left(dh\right)\in\R
\]
for $j=1,2,\ldots,n$.

A second issue is that we want $X$ to be regular.  This is ensured by requiring that $G$ has either a zero or pole at $p\in\Sigma$ if and only if $dh$ has a zero at $p$ with the same multiplicity.

\section{Constructions}
One can use the desired geometry of a minimal surface to create potential Weierstrass data for a minimal surface.  A vertical normal at $p_j$ corresponds to $G(p_j)=0$ (downward pointing normal) or $G(p_j)=\infty$ (upward pointing normal).  If $X$ has a horizontal planar end at $p_j$ then $G$ has a zero or pole of order $k+1$ and $dh$ has a zero of order $k-1$ at $p_j$.  If $X$ has a catenoid end at $p_j$ with vertical normal then $G$ has a simple pole or zero and $dh$ has a simple pole at $p_j$.  If $X$ has a catenoid end at $p_j$ with non-vertical normal then $G$ has neither a pole or zero and $dh$ has a double order pole at $p_j$.

When the domain is a punctured sphere, the sum of zeros of $G$ equals the sum of poles of $G$ and the sum of zeros of $dh$ is two less then the sum of poles of $dh$.

We can use the images of our desired surfaces to construct the Weierstrass data $G$ and $dh$.  For the vase of catenoids, place the horizontal planar end at $z=\infty$ and the downward pointing catenoid end with vertical normal at $z=0$.  Place the upward pointing catenoid ends with non-vertical normals at each root of unity.  Fix $G(0)=0$.  Then $G(\infty)=\infty$.  There will also be a point on each catenoid at the roots of unity with vertical downward pointing normal.  In keeping with the symmetry of the surface, fix
\[
G\left(ae^{i2\pi j/k}\right)=0
\]
for $j=0,1,\ldots,k-1$.
Let $G(z)$ have simple zeros at $z=0$ and the roots of $z^k=a^k$, with $a\in\R$.  Then, $G$ has a pole of order $k+1$ at $z=\infty$, and 
\[
G(z)=\rho z(z^k-a^k).
\]
The height differential has simple zeros at the roots of $z^k=a^k$, double order poles at the $k$th-roots of unity, and a simple pole at $z=0$.  This forces $dh$ to have a zero of order $k-1$ at $z=\infty$, and
\[
dh=\frac{a^k-z^k}{z(z^k-1)^2}dz.
\]

\begin{theorem}
For each positive integer $k>1$ and real number $a\in(0,1)$ there exists a $\rho>0$ such that
\[
\begin{split}
G(z)&=\rho z(z^k-a^k)\\
dh&=\frac{a^k-z^k}{z(z^k-1)^2}dz
\end{split}
\]
is the Weierstrass data for a minimal surface with a horizontal planar end at $z=\infty$, a downward pointing vertical catenoid end at $z=0$, and upward pointing non-vertical catenoid ends at the roots of unity.
\end{theorem}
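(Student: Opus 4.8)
The plan is to verify, for the given pair $(G,dh)$, the three requirements of Section~2: regularity of $X$ on $\Sigma=\overline{\C}\setminus\bigl(\{0,\infty\}\cup\{\zeta:\zeta^{k}=1\}\bigr)$, solvability of the period problem for a suitable $\rho>0$, and that the ends are of the stated types. Regularity and the identification of the ends are bookkeeping with zero/pole orders. On $\Sigma$ the polynomial $G=\rho z(z^{k}-a^{k})$ has no poles and its only zeros are the simple zeros at the roots of $z^{k}=a^{k}$, which lie in $\Sigma$ because $a\in(0,1)$ keeps them off $0$ and off the $k$-th roots of unity; and $dh$ has simple zeros at exactly those points and nowhere else on $\Sigma$. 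So $G$ has a zero or pole at a point of $\Sigma$ precisely when $dh$ has a zero there of the same order. For the ends: at $z=\infty$, $G$ has a pole of order $k+1$ and $dh$ a zero of order $k-1$ (a horizontal planar end); at $z=0$, $G$ has a simple zero and $dh$ a simple pole, with $\operatorname{Res}_{0}dh=a^{k}>0$ (a downward catenoid end with vertical normal); and at each root of unity $\zeta$, $G(\zeta)=\rho\zeta(1-a^{k})\neq0$ while $dh$ has a double pole, with $\operatorname{Res}_{\zeta}dh=-a^{k}/k<0$ (an upward catenoid end with non-vertical normal).

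The period problem is the heart of the matter, and the decisive point is that the relevant products collapse:
\[
G\,dh=-\rho\left(\frac{z^{k}-a^{k}}{z^{k}-1}\right)^{2}dz,
\qquad
\frac{dh}{G}=\frac{-1}{\rho\,z^{2}(z^{k}-1)^{2}}\,dz .
\]
Writing $A=\operatorname{Res}_{p}(dh/G)$ and $B=\operatorname{Res}_{p}(G\,dh)$, the two conditions $\operatorname{Res}_{p}\bigl(\tfrac1G-G\bigr)dh\in\R$ and $\operatorname{Res}_{p}\,i\bigl(\tfrac1G+G\bigr)dh\in\R$ say $\operatorname{Im}A=\operatorname{Im}B$ and $\operatorname{Re}A=-\operatorname{Re}B$, i.e. $A=-\overline{B}$. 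A direct computation gives $\operatorname{Res}_{0}dh=a^{k}$, $\operatorname{Res}_{\infty}dh=0$, and $\operatorname{Res}_{\zeta}dh=-a^{k}/k$ at each root of unity, so the third period condition $\operatorname{Res}_{p}dh\in\R$ holds at every end. From the two displayed formulas, $G\,dh$ is holomorphic at $z=0$ and $dh/G$ vanishes to order $2k$ at $z=\infty$, while at $z=0$ the form $dh/G$ has a double pole with zero residue and at $z=\infty$ the form $G\,dh$ has a double pole with zero residue; here $k>1$ is used. Hence $A=B=0$ at $z=0$ and at $z=\infty$, and the periods at those two ends vanish automatically.

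There remain the $k$ catenoid ends at the roots of unity, and because the substitution $z\mapsto e^{2\pi i/k}z$ sends $G\mapsto e^{2\pi i/k}G$ and $dh\mapsto dh$, it suffices to treat a single $\zeta$. Expanding the two displayed one-forms about $\zeta$ --- writing $z^{k}-1=(z-\zeta)g(z)$ with $g(\zeta)=k\zeta^{k-1}$ and $g'(\zeta)=\tfrac{k(k-1)}{2}\zeta^{k-2}$, and using $\zeta^{k}=1$ throughout --- one obtains
\[
\operatorname{Res}_{\zeta}\frac{dh}{G}=\frac{k+1}{\rho k^{2}}\,\overline{\zeta},
\qquad
\operatorname{Res}_{\zeta}G\,dh=-\rho\,\frac{(1-a^{k})\bigl(2k-(k-1)(1-a^{k})\bigr)}{k^{2}}\,\zeta .
\]
Since the first is a real multiple of $\overline{\zeta}$ and the second a real multiple of $\zeta$, the condition $A=-\overline{B}$ becomes the single scalar equation
\[
\rho^{2}=\frac{k+1}{(1-a^{k})\bigl(2k-(k-1)(1-a^{k})\bigr)},
\]
which is the same for every root of unity $\zeta$. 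As $a\in(0,1)$ we have $1-a^{k}\in(0,1)$ and $2k-(k-1)(1-a^{k})>2k-(k-1)=k+1>0$, so the right-hand side is positive and there is a unique $\rho>0$ solving it. For this $\rho$ the period problem is solved, and together with the first paragraph this exhibits $(G,dh)$ as regular Weierstrass data on $\Sigma$ producing the claimed surface.

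The only step where I expect genuine work is the residue computation at the double poles at the roots of unity: carrying the expansion of $z^{k}-1$ to second order and of $z^{-2}$ and $(z^{k}-a^{k})^{2}$ to first order, and verifying that the resulting $O\bigl((z-\zeta)^{-1}\bigr)$ coefficients are honest real multiples of $\overline{\zeta}$ and of $\zeta$ --- this last fact is exactly what makes the two vector period conditions collapse into one equation free of $\zeta$. The remaining ingredients, notably the positivity of $2k-(k-1)(1-a^{k})$ and hence the existence of $\rho>0$, are elementary.
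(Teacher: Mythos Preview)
Your proof is correct and follows essentially the same approach as the paper: after checking regularity and the end types (which the paper handles in the discussion preceding the theorem), you reduce the period problem via the rotational symmetry to a single real equation at a root of unity, and your value of $\rho^{2}$ agrees with the paper's once one expands $2k-(k-1)(1-a^{k})=ka^{k}+k-a^{k}+1$. Your presentation is more explicit --- writing out $G\,dh$ and $dh/G$ in closed form and phrasing the two horizontal period conditions as $A=-\overline{B}$ --- but the logic and the resulting formula for $\rho$ are the same.
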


\begin{proof}
All that remains is to solve the period problem. As the residues of $dh$ are all real, $dh$ has no periods.  The residues of $Gdh$ and $1/Gdh$ are zero at $z=0$ and $z=\infty$.  Assuming $\rho\in\R$, the symmetries of the surface we are constructing reduce the period problem to the equation
\[
0=\text{Res}_1\left(\left(\frac{1}{G}+G\right)dh\right)=\frac{\rho(a^k-1)(ka^k+k-a^k+1)}{k^2}+\frac{k+1}{\rho k^2}
\]
which is solved when 
\[
\rho=\sqrt{\frac{k+1}{(1-a^k)(ka^k+k-a^k+1)}}.
\]
\end{proof}

Examining figure \ref{figure:doublekvase}, the second family has horizontal planar ends at $z=0$ and $z=\infty$, downward pointing catenoid ends with non-vertical normal at the roots of $z^k=b^k$, and upward pointing catenoid ends with non-vertical normal at the roots of $z^k=1/b^k$.  The Gauss map is $0$ at the roots of $z^k=a^k$ and $\infty$ at the roots of $z^k=1/a^k$.  The height differential $dh$ has double order poles at the roots of $z^k=b^k$ and $z^k=1/b^k$ and simple zeros at the roots of $z^k=a^k$ and $z^k=1/a^k$.  In order for the surface to have horizontal planar ends at $0$ and $\infty$, we need $dh$ to have zeros at $0$ and $\infty$.  Thus, set $dh$ with zeros of order $k-1$ at $0$ and $\infty$.  This forces $G$ to have a zero of order $k+1$ at $z=0$ and a pole of order $k+1$ at $z=\infty$.  Hence,
\[
G(z)=\frac{\rho z^{k+1}(z^k-a^k)}{a^kz^k-1}
\]
and
\[
dh=\frac{b^{2k}z^{k-1}(z^k-a^k)(a^kz^k-1)}{a^k(z^k-b^k)^2(b^kz^k-1)^2}dz.
\]
If $\rho=1$ then, similar to the first example, the period problem reduces to a single equation.
\begin{theorem}
For each positive integer $k>1$ and real number $b\in(0,1)$ there exists an $a>0$ such that 
\[
\begin{split}
G(z)&=\frac{z^{k+1}(z^k-a^k)}{a^kz^k-1}\\
dh&=\frac{b^{2k}z^{k-1}(z^k-a^k)(a^kz^k-1)}{a^k(z^k-b^k)^2(b^kz^k-1)^2}dz
\end{split}
\]
is the Weierstrass data for a minimal surface with horizontal planar ends at $z=0$ and $z=\infty$, upward pointing non-vertical catenoid ends at solutions to $z^k=b^k$, and downward pointing non-vertical catenoid ends at solutions to $z^k=1/b^k$.
\end{theorem}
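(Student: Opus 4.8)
The plan is to follow the pattern of the first theorem's proof: show $dh$ contributes no periods, use the symmetries of the configuration to reduce the period problem to a single real equation in $a$, and then solve that equation.

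Since $a,b$ are real, $dh$ has real coefficients, so its residue at the real catenoid end $z=b$ is real; as the rotation $z\mapsto e^{2\pi i/k}z$ fixes $dh$ and sends $G\mapsto e^{2\pi i/k}G$, the residue of $dh$ is this same real number at every solution of $z^k=b^k$, and likewise at every solution of $z^k=1/b^k$, while $dh$ vanishes at $0$ and $\infty$; hence $dh$ has no periods. For the same reason $(1/G-G)\,dh$ and $(1/G+G)\,dh$ have real Laurent coefficients at $z=b$, so of the three period conditions at $z=b$ the only one not automatic is $\text{Res}_b\left(i(1/G+G)\,dh\right)\in\R$, which is the equation
\[
\text{Res}_b\left(\left(\frac{1}{G}+G\right)dh\right)=0 .
\]
Once this holds the period over a loop about $z=b$ vanishes, and since the order-$k$ rotation acts on periods by the corresponding rotation of $\R^3$ this kills the periods about every point of the orbit $z^k=b^k$; the inversion $z\mapsto1/z$, under which $G\mapsto1/G$ and hence $(1/G+G)\,dh\mapsto-(1/G+G)\,dh$, does the same for the orbit $z^k=1/b^k$ and relates the periods about $0$ and $\infty$.

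The periods at the planar ends should then vanish automatically. Indeed $G\,dh=\frac{b^{2k}z^{2k}(z^k-a^k)^2}{a^k(z^k-b^k)^2(b^kz^k-1)^2}\,dz$ has a zero of order $2k$ at $z=0$, while
\[
\frac{1}{G}\,dh=\frac{b^{2k}(a^kz^k-1)^2}{a^k z^{2}(z^k-b^k)^2(b^kz^k-1)^2}\,dz
\]
has a double pole at $z=0$ with residue $g'(0)$, where $g(z)=\frac{b^{2k}(a^kz^k-1)^2}{a^k(z^k-b^k)^2(b^kz^k-1)^2}$ is holomorphic and nonzero at the origin; the logarithmic derivative $g'/g$ has a zero of order $k-1$ there and $k\ge2$, so $g'(0)=0$, and by inversion the same holds at $z=\infty$. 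Thus the entire period problem reduces to the single equation displayed above.

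Finally, evaluating the double–pole residues $\text{Res}_b(G\,dh)$ and $\text{Res}_b\left((1/G)\,dh\right)$ from the formulas above and clearing denominators, that equation becomes a quadratic $\mathcal{Q}(u)=0$ in $u=a^k$ with leading coefficient $-2kb^{2k}(b^{2k}+1)\ne0$ and strictly positive discriminant $4b^{2k}(b^{2k}-1)^2\left[k^2(b^{2k}+1)^2+(k+1)^2(b^{2k}-1)^2\right]$. I expect the main obstacle to be the sign bookkeeping that pins down a usable root: one computes $\mathcal{Q}(1)=(b^k-1)^2\left[(k+1)(b^{2k}-1)^2+2kb^k(b^{2k}+1)\right]>0$ and $b^{2k}\mathcal{Q}(b^{-k})=(k+1)b^{2k}(b^{2k}-1)^3<0$, so $\mathcal{Q}$ has a root $u_0\in(1,b^{-k})$; setting $a=u_0^{1/k}$ gives $a\in(1,1/b)$, hence $a\notin\{b,1,1/b\}$, so $G$ has a zero or pole exactly where $dh$ has a zero with matching multiplicity, $G$ is finite and nonzero at the catenoid ends, $X$ is regular, and all periods close. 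This $a$ is the one asserted.
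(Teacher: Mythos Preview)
Your overall strategy matches the paper's exactly: use the rotational and inversional symmetries of $G$ and $dh$ (together with the vanishing of the residues of $Gdh$ and $\tfrac{1}{G}dh$ at the planar ends) to reduce the period problem to the single real equation $\text{Res}_b\big((G+\tfrac{1}{G})dh\big)=0$, observe that this is a quadratic in $u=a^k$, and extract a positive root. The paper simply writes down the quadratic and its explicit solution; your intermediate-value argument, locating a root between $u=1$ and $u=b^{-k}$, is a pleasant alternative and has the bonus of pinning $a\in(1,1/b)$, which makes the regularity check clean.

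The gap is that your explicit quadratic is wrong. From
\[
G\,dh=\frac{b^{2k}z^{2k}(z^k-a^k)^2}{a^k(z^k-b^k)^2(b^kz^k-1)^2}\,dz,\qquad
\frac{1}{G}\,dh=\frac{b^{2k}(a^kz^k-1)^2}{a^k z^{2}(z^k-b^k)^2(b^kz^k-1)^2}\,dz,
\]
the factors $z^{2k}$ and $z^{2}$ produce genuine powers of $b^{2}$ (not only $b^{2k}$) in the residue at $z=b$. After clearing the common denominator $a^k k^2 b(b^{2k}-1)^3$ one finds the leading coefficient of $\mathcal{Q}(u)$ to be $b^{2k}\big[(k-1)(1+b^{2k+2})+(k+1)(b^{2}+b^{2k})\big]$, not $-2kb^{2k}(b^{2k}+1)$; the paper records exactly this coefficient, and your discriminant and the values $\mathcal{Q}(1)$, $\mathcal{Q}(b^{-k})$ are correspondingly off. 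Fortunately your IVT argument survives the correction: with the right $\mathcal{Q}$ one gets
\[
\mathcal{Q}(1)=(b^k-1)^2\big[(k+1)(1-b^{2k})(1-b^{2k+2})+2kb^k(1+b^{2k+2})\big]>0,
\qquad
\mathcal{Q}(b^{-k})=(k+1)\,b^{2}(b^{2k}-1)^{3}<0,
\]
so there is indeed a root $u_0\in(1,b^{-k})$, and the rest of your argument goes through. Redo the residue computation keeping track of the $z^{2k}$ and $z^{2}$ factors, and the proof is complete.
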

\begin{proof}
As with the vase of catenoids, the symmetries of the surface reduce the period problem to the equation

\[
\begin{split}
0=&\text{Res}_b\left(\frac{1}{G}+G\right)dh\\
=&\frac{b^{2k}\left(k-1+b^{2+2k}(k-1)+(b^2+b^{2k})(k+1)\right)a^{2k}}{a^kb(b^k-1)^3(b^k+1)^3k^2}\\
&+\frac{2b^k\left(1+b^{2+4k}-(b^{2k}+b^{2+2k})(2k+1)\right)a^k}{a^kb(b^k-1)^3(b^k+1)^3k^2}\\
&+\frac{-k-1+b^{2k}+b^{2+4k}-b^{2+6k}+3kb^{2k}+3kb^{2+4k}-kb^{2+6k}}{a^kb(b^k-1)^3(b^k+1)^3k^2}
\end{split}
\]

which is solved when

{\footnotesize
\[
a=\sqrt[k]{\frac{-1-b^{2+4k}+(b^{2k}+b^{2+2k})(2k+1)+(1-b^{2k})\sqrt{k^2+b^2(1-b^{2k})^2(2k+1)+k^2b^2(1+b^{4k}+b^{2+4k})}}{b^k(k-1+b^{2+2k}(k-1)+(b^2+b^{2k})(k+1))}}
\]}

and $0<b<1$.  We do need $a>0$.  Assume that $k>1$ and $0<b<1$.  Then the denominator in $a$ is clearly positive, and the numerator in $a$ is positive because it is greater than
\[
-1-b^{2+4k}+(b^{2k}+b^{2+2k})(2k+1)+(1-b^{2k})\sqrt{k^2}= k-1+b^{2k}(k+1-b^{2+2k}+b^2(2k+1))>0.
\]
Thus, $a>0$ when $k>1$ and $0<b<1$.  For example, $a\approx 3.97667$ when $b=0.25$ and $k=6$.  
\end{proof}
\addcontentsline{toc}{section}{References}
\bibliographystyle{plain}
\bibliography{minlit}

\label{sec:liter}

\end{document}